\theoremstyle{plain}
\newtheorem{thm}{\protect\theoremname}
\theoremstyle{remark}
\newtheorem*{claim*}{\protect\claimname}
\theoremstyle{plain}
\newtheorem{lem}[thm]{\protect\lemmaname}
\gdef\SetFigFontNFSS#1#2#3#4#5{} 
\theoremstyle{remark}
\newtheorem*{qst*}{Question}
\theoremstyle{plain}
\def\afs#1#2{\href{#1}{\nolinkurl{#2}}}
\def\afs#1#2{\burlalt{#1}{#2}}
\let\@wraptoccontribs\wraptoccontribs
\def\@setauthors{%
  \begingroup
  \def\thanks{\protect\thanks@warning}%
  \trivlist
  \centering\footnotesize \@topsep30\p@\relax
  \advance\@topsep by -\baselineskip
  \item\relax
  \author@andify\authors
  \def\\{\protect\linebreak}%
  \begin{center}
    \normalsize
    \textsc{Gady Kozma}$^*$ \textsc{and}
    \textsc{Alexander Lubotzky}$^{**}$
  \end{center}
  \bigskip \bigskip \bigskip
  \begin{minipage}[t]{0.45\textwidth}
    \normalsize
    $*$Department of Mathematics,\\
    The Weizmann Institute of Science,\\
    Rehovot 76100,\\
    Israel.\\
    \tt{gady.kozma@weizmann.ac.il}
  \end{minipage}
  \qquad
  \begin{minipage}[t]{0.45\textwidth}
    \normalsize
    $^{**}$Einstein Institute of Mathematics,\\
    The Hebrew University,\\
    Jerusalem 91904,\\
    Israel.\\
    \tt{alex.lubotzky@mail.huji.ac.il}
  \end{minipage}
  \ifx\@empty\contribs
  \else
    ,\penalty-3 \space \@setcontribs
    \@closetoccontribs
  \fi
  \endtrivlist
  \endgroup
}
\providecommand{\claimname}{Claim}
\providecommand{\lemmaname}{Lemma}
\providecommand{\theoremname}{Theorem}
\begin{document}
\title{Linear representations of random groups}
\author{Gady Kozma and Alexander Lubotzky}
\begin{abstract}
We show that for a fixed $k\in\mathbb{N}$, Gromov random groups with
any density $d>0$ have no non-trivial degree $k$ representations
over any field, a.a.s. This is especially interesting in light of
the results of Agol, Ollivier and Wise that when $d<\frac{1}{6}$
such groups have a faithful linear representation over $\mathbb{Q}$,
a.a.s.
\end{abstract}

\maketitle

\section{Introduction}

\subsection*{Gromov random groups.}

Let $m\ge2$ and let $F$ be the free group on $m$ generators $x=\{x_{1},\dotsc,x_{m}\}$.
For $l\in\mathbb{N}$, let $S_{l}$ be the sphere of radius $l$ in
the Cayley graph of $F$ with respect to $X$, i.e.\ the set of reduced
words in $x_{i}^{\pm1}$ of length $l$. Fix some $d\ge0$ and let
$R$ be a random subset of $S_{l}$ constructed by taking $\lfloor|S_{l}|^{d}\rfloor=\lfloor\big(2m(2m-1)^{l-1}\big)^{d}\rfloor$
elements of $S_{l}$ uniformly, independently and with repetitions.
The group $\Gamma=\langle x|R\rangle$ i.e.\ the group presented
by the generators $x$ and the relators $R$ is called a ``Gromov
random group of density $d$ with $m$ generators and relators of
length $l$''. For a group property $P$, we say that Gromov random
groups satisfy $P$ asymptotically almost surely (a.a.s.) if the probability
of $P$ goes to $1$ as $l\to\infty$. In a formula,
\[
\lim_{l\to\infty}\mathbb{P}(\Gamma\text{ satisfies }P)=1.
\]
See \cite{O05} for an invitation to the topic. The goal of this note
is to prove
\begin{thm}
\label{thm:d}Let $k\ge1$, $m\ge2$ and $d>0$. Then Gromov random
groups $\Gamma$ at density $d$ with $m$ generators satisfy a.a.s.\ that
for any field $F$ and any $\rho:\Gamma\to\mathrm{GL}_{k}(F)$, $|\rho(\Gamma)|\le2$.
\end{thm}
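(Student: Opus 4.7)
My plan is a union bound over candidate homomorphisms $\rho\colon F\to\mathrm{GL}_{k}(F)$ with $|\rho(F)|\ge 3$: for each such $\rho$ I estimate the probability that every random relator lies in $\ker\rho$, and then control the sum.

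For a single $\rho$, since $R$ consists of $\lfloor|S_{l}|^{d}\rfloor$ independent uniform samples from $S_{l}$, one has $\mathbb{P}(R\subseteq\ker\rho)=\delta(\rho)^{\lfloor|S_{l}|^{d}\rfloor}$, where $\delta(\rho):=|\ker\rho\cap S_{l}|/|S_{l}|$. The key input is a bound $\delta(\rho)\le 1-\epsilon$ whenever $|\rho(F)|\ge 3$, for some $\epsilon>0$ ideally depending only on $m$ and $k$: a uniform $w\in S_{l}$ corresponds to an $l$-step non-backtracking walk on the Cayley graph of $\rho(F)$ with generators $\{\rho(x_{i})^{\pm1}\}$, and when the image has three or more elements this walk cannot concentrate all its mass at the identity. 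For finite image of size $n$ the limiting return density is $1/n\le 1/3$, while for infinite image the walk spreads and the density goes to $0$. Since $|S_{l}|^{d}$ is exponential in $l$, this yields the super-exponentially small per-$\rho$ bound $(1-\epsilon)^{|S_{l}|^{d}}$.

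For the union bound I would first reduce to finite quotients via Malcev's theorem: a finitely generated linear group is residually finite, so if $|\rho(\Gamma)|\ge 3$ for some $\rho$, then $\Gamma$ surjects onto an $m$-generated finite subgroup $G\subseteq\mathrm{GL}_{k}(F')$ with $|G|\ge 3$ over some residue field $F'$. The probability in question is then bounded by $\sum_{G}|\mathrm{Epi}(F,G)|\cdot\delta_{G}^{|R|}\le\sum_{G}|G|^{m}(1-\epsilon)^{|S_{l}|^{d}}$, the sum running over isomorphism classes of such $G$. For $|G|$ bounded (or polynomial in $l$), the number of $m$-generated subgroups of $\mathrm{GL}_{k}$ of each given size can be controlled using Jordan's theorem in characteristic zero and Larsen--Pink-type bounds in positive characteristic, and the super-exponentially small factor $(1-\epsilon)^{|S_{l}|^{d}}$ dwarfs any polynomial or even mild exponential count.

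The main obstacle is the regime $|G|\gg|S_{l}|$: the length-$l$ walk on $G$ has no chance to equidistribute, so the clean limiting bound $\delta_{G}\to 1/|G|$ is unavailable. Here I would exploit that the Schreier graph of $G$ with respect to $\{\phi(x_{i})^{\pm1}\}$ must locally resemble the $(2m-1)$-ary tree (the walk sees at most $(2m-1)^{l}\ll|G|$ vertices), so a non-backtracking walk of length $l$ returns to $1$ only through a short cycle at the base-point --- a rare event whose frequency can be bounded combinatorially, forcing $\delta_{G}\le(2m-1)^{-\Omega(l)}$ for most such $\phi$ and thus easily compensating for the larger count of possible $G$. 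Patching together the small, medium, and large $|G|$ regimes, with uniform counts of $m$-generated linear finite groups in every characteristic, is where the real technical bookkeeping lies, but the overwhelming strength of the per-$\rho$ decay should leave ample slack.
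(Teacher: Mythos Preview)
Your reduction to finite quotients is sound (specialise the finitely generated ring of matrix entries to a finite field), but the union bound you propose over all such quotients does not converge, and your treatment of the large-$|G|$ regime is where it breaks. Take $k=1$: every cyclic group $\mathbb{Z}/N$ embeds in $\mathrm{GL}_1(\overline{\mathbb{F}_p})$ for suitable $p$, and for the surjection $\phi$ sending every $x_i$ to $1$, the kernel on $S_l$ consists of the words of total exponent sum $0$ (once $N>l$), so $\delta(\phi)\asymp l^{-1/2}$ for even $l$, \emph{not} $(2m-1)^{-\Omega(l)}$. The Cayley graph here has girth $2$ regardless of how large $N$ is --- the claim that for $|G|\gg |S_l|$ the graph ``must locally resemble the tree'' is simply false, since girth depends on the generating tuple, not on $|G|$. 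Thus your sum $\sum_G |G|^m\delta_G^{|R|}$ contains $\sum_{N>l} N^m\cdot (C/\sqrt{l})^{|R|}$, which diverges in $N$. Jordan and Larsen--Pink control the \emph{structure} of finite linear groups of a given order, but do nothing to make the set of isomorphism classes (summed over all orders) finite; already cyclic groups defeat you. The underlying phenomenon is that infinitely many distinct $(G,\phi)$ share the same kernel on $S_l$, and a naive union bound counts the same event infinitely often.

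The paper avoids this entirely by working not with individual homomorphisms but with the representation variety $X\subset\mathrm{GL}_k^m$ over a fixed algebraically closed field. Each random relator is a polynomial equation on $X$; Lemma~\ref{lem:half} (your non-backtracking bound) shows that with probability at least $1/(2m-1)$ it is nontrivial on any component containing a tuple with $|\langle A\rangle|>2$, hence cuts the dimension. B\'ezout's theorem bounds the number of irreducible components by $l^{2mk^2}$, so a polynomial-in-$l$ batch of relators drives the bad locus to dimension $-1$ with high probability. This handles one field; the passage to \emph{all} fields is then done algebraically, via an effective Nullstellensatz: over $\mathbb{C}$ the vanishing of the bad locus is witnessed by polynomial identities with an explicitly bounded integer denominator $B$, which transfer to every characteristic $>B$, and the finitely many characteristics $\le B$ are handled one at a time with further relators. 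No enumeration of finite quotients is needed.
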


In fact, we prove that polynomially many relators are enough for this
property, see the formulation of Theorem \ref{thm:poly} below.

When $l$ is odd, it is easy to see that $\mathbb{Z}/2\mathbb{Z}$
is a.a.s.\ not a quotient of $\Gamma$ hence in fact we may strengthen
Theorem \ref{thm:d} to state that $\rho(\Gamma)=\{1\}$. Similarly,
when $l$ is even $\mathbb{Z}/2\mathbb{Z}$ is (deterministically)
a quotient of $\Gamma$ so the possibility of an image of size 2 cannot
be removed.

We recall the well-known result of Gromov \cite[\S V]{O05} that for
a fixed $m\ge2$, $\Gamma$ is a.a.s.~an infinite hyperbolic group
for $d<\frac{1}{2}$, while $|\Gamma|\le2$ for $d>\frac{1}{2}$.
So our theorem is of interest only for $d\le\frac{1}{2}$. But it
is especially interesting for $d<\frac{1}{6}$. In this case Agol
\cite{A13} and Ollivier and Wise \cite{OW13} proved the following
remarkable result:
\begin{thm}
\label{thm:agol}For a fixed $m\ge2$ and $d<\frac{1}{6}$, the random
group $\Gamma$ is a.a.s.\ linear over $\mathbb{Z}$ i.e.\ has a
faithful representation into $\mathrm{GL}_{k}(\mathbb{Z})$, for some
$k\in\mathbb{N}$.
\end{thm}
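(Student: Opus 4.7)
The plan is to combine three deep pieces of machinery from geometric group theory, together with a standard bootstrap at the end. First I would check that $\Gamma$ is hyperbolic: for $d<\tfrac{1}{2}$ this is Gromov's classical theorem (cited already), so a fortiori it holds at $d<\tfrac{1}{6}$. From this point on, the problem is the more rigid one of producing a faithful integral linear representation of a (random) hyperbolic group of a rather special geometric form.

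The second and substantial step is \emph{cubulation}: show that a.a.s.\ $\Gamma$ acts properly and cocompactly on a CAT(0) cube complex. The approach, due to Ollivier and Wise, is to exploit the small-cancellation-like geometry available below density $\tfrac{1}{6}$ in order to produce a rich collection of codimension-one subgroups, coming from ``hypergraphs'' (walls) in the Cayley 2-complex, and then to run Sageev's dual-cube-complex construction on these walls. Properness and cocompactness reduce to quantitative statements about how many walls separate a given pair of points at a given distance in the Cayley graph, and this is exactly where the probabilistic estimates on random relators have to be fed in.

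With $\Gamma$ hyperbolic and cubulated, I would invoke Agol's virtual specialness theorem: a hyperbolic group acting properly and cocompactly on a CAT(0) cube complex contains a finite-index subgroup $H$ that is the fundamental group of a compact \emph{special} cube complex in the sense of Haglund--Wise. By Haglund--Wise, such an $H$ embeds in a right-angled Artin group, and RAAGs admit explicit faithful representations into $\mathrm{GL}_{k}(\mathbb{Z})$ for suitable $k$; hence $H$ itself is $\mathbb{Z}$-linear.

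Finally, promote linearity from $H$ to $\Gamma$ by induced representations: if $\rho\colon H\hookrightarrow\mathrm{GL}_{k}(\mathbb{Z})$ is faithful and $[\Gamma:H]=n$, then $\mathrm{Ind}_{H}^{\Gamma}\rho\colon\Gamma\to\mathrm{GL}_{kn}(\mathbb{Z})$ is faithful, since a direct coset-representative computation shows its kernel equals $\bigcap_{g\in\Gamma}g(\ker\rho)g^{-1}=\{1\}$. The main obstacles in this plan are clearly the two middle steps: Ollivier--Wise cubulation at density $<\tfrac{1}{6}$, and Agol's virtual specialness theorem. Each is a deep theorem in its own right, and there seems to be no shortcut avoiding either; the final induction step and Gromov's hyperbolicity statement are, by comparison, routine inputs.
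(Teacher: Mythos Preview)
Your outline is correct and matches the standard route through the literature: Ollivier--Wise cubulation at density $<\tfrac{1}{6}$, Agol's virtual specialness for hyperbolic cubulated groups, the Haglund--Wise embedding of special groups into right-angled Artin groups, $\mathbb{Z}$-linearity of RAAGs, and induction of the representation up from a finite-index subgroup. There is nothing to compare against, however, because the paper does not give its own proof of this theorem: it is stated as a result of Agol \cite{A13} and Ollivier--Wise \cite{OW13} and used only as context for the main theorem. In other words, your proposal is not so much an alternative to the paper's argument as a faithful unpacking of what the citations \cite{A13,OW13} contain.
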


Thus the main difference between these two results is whether $k$,
the degree of linearity, is allowed to depend on $l$, the length
of the relators, or not. If it is allowed, we are in the case of Theorem
\ref{thm:agol} and a representation exists. If it is fixed, we are
in the case of Theorem \ref{thm:d} and no representation exists.

A remark on the field: while Theorem \ref{thm:agol} constructs a
representation into $\mathbb{Q}$, in fact it implies arbitrarily
large representations for any field. We cannot show this by taking
the representation into $\textrm{GL}_{k}(\mathbb{Z})$ modulo $p$
as that might be trivial. But we can, instead, use the fact that any
subgroup of $\textrm{GL}_{k}(\mathbb{Z})$ is residually finite (simply
because $\cap_{m}\ker(\textrm{GL}_{k}(\mathbb{Z})\to\textrm{GL}_{k}(\mathbb{Z}/m\mathbb{Z}))=\{1\}$)
so has arbitrarily large finite quotients. These finite quotients
may be embedded into a symmetric group, hence for some $k'$ it will
embed (as permutation matrices) into $\mathrm{GL}_{k'}(F)$ for any
$F$. 

\section{\label{sec:AG}Algebraic geometry preliminaries}

The proof uses some results from algebraic geometry. We will now survey
briefly the notions and results we need, assuming only that the reader
is familiar with undergraduate algebra. 

Let $F$ be an algebraically closed field of any characteristic, and
let $n\ge0$. A subset $W$ of the affine space $\mathbb{A}^{n}:=F^{n}$
is called an \emph{(affine) variety} if 
\[
W=\bigcap_{i=1}^{k}Z(p_{i})\qquad Z(p)=\{x\in\mathbb{A}^{n}:p(x)=0\},
\]
where $p_{1},\dotsc,p_{k}$ are polynomials in $n$ variables. We
will use the notations $F$ (for the underlying algebraically closed
field) and $Z(p)$ throughout the paper.

A variety is called \emph{irreducible} if it cannot be written as
a union of two proper subvarieties. Any variety can be written as
a finite union of irreducible varieties. Assuming that the representation
is not redundant (i.e.\ if $W=\bigcup X_{i}$ then $X_{i}\nsubseteq X_{j}$
for any $i\ne j$), it is unique. The $X_{i}$ of this unique representation
are called the \emph{irreducible components} of $W$. See \cite[theorems 1.4 \& 1.5]{Sh}.
Let us remark that in some of the literature, including \cite{H77,Sh},
a variety is defined to be automatically irreducible. But for us it
will be convenient to define it as above.

For any affine variety one can define its \emph{dimension}, denoted
by $\dim$.  Heuristically it corresponds with the natural notion
of dimension, but the formal definition requires some preliminaries
which we prefer to skip. The reader may consult \cite[chapter 1, \S 6]{Sh}.
We will need the following properties of it:
\begin{enumerate}
\item $\dim(W)\in\{-1,0,1,2,\dotsc\}.$
\item $\dim(\mathbb{A}^{n})=n$.
\item $\dim(W)=-1$ only for $W=\emptyset$ and $\dim(W)=0$ implies that
$W$ is finite.
\item \label{enu:dim decrease}If $W$ is an irreducible algebraic variety
with $\dim(W)=k$ and if $p$ is a polynomial not identically zero
on $W$, then any irreducible component of $W\cap Z(p)$ has dimension
$k-1$. 
\end{enumerate}
See \cite[Corollary 1.13]{Sh} for this last, and most remarkable
property. (Note that theorem numbering in the third edition of Shafarevich
is different from those of the previous editions).

The following result will be referred to as ``B\'ezout's theorem''
(the literature is abound with results called ``B\'ezout's theorem'',
some of them very close in formulation to it, so we are certainly
following tradition here). 
\begin{thm}[B\'ezout]
\label{thm:Bezout}Let $W$ be an affine variety defined by polynomials
$f_{1},f_{2},\dotsc,f_{m}$ in $n$ variables i.e.\ $W=\cap Z(f_{i})\subset\mathbb{A}^{n}$.
Suppose $\deg f_{i}\le d$ for all $i$. Then the number of irreducible
components of $W$ is bounded by $d^{\min(m,n)}$.
\end{thm}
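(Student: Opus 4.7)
The plan is to induct on $\min(m,n)$, using property (iv) together with an auxiliary notion of degree for an irreducible variety. The first step is to factor each $f_{i}$ as $f_{i}=g_{i,1}\cdots g_{i,s_{i}}$ into irreducibles; since every factor has degree at least $1$ and $\deg f_{i}\le d$, we have $s_{i}\le d$. Any irreducible subset $Y\subseteq W$ lies in some $Z(f_{i})=\bigcup_{j}Z(g_{i,j})$, and irreducibility forces $Y\subseteq Z(g_{i,j(i)})$ for some index $j(i)$. Each irreducible component of $W$ therefore sits inside $\bigcap_{i=1}^{m}Z(g_{i,j(i)})$ for some tuple $(j(1),\dotsc,j(m))$, and there are at most $\prod_{i}s_{i}\le d^{m}$ such tuples.

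For each fixed tuple, $\bigcap_{i}Z(g_{i,j(i)})$ is an intersection of irreducible hypersurfaces, which I would process one at a time: by property (iv), at each step the current variety either is unchanged (if the new hypersurface vanishes on every component) or every new irreducible component has dimension exactly one less. After at most $n$ such reductions the dimension reaches $0$, and any subsequent intersections can only remove isolated points. This yields the dimension bookkeeping that underlies the target $d^{\min(m,n)}$ bound.

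The main obstacle is that property (iv) controls dimensions but not the number of components produced at each cut: a single irreducible hypersurface can split an irreducible variety into many components, as in the classical example of a nodal cubic cut through its node. To overcome this I would introduce the degree of an irreducible variety (say, as the number of points in its intersection with a generic affine subspace of complementary dimension), show that it is sub-multiplicative under intersection with a hypersurface, and deduce that the number of components of $\bigcap_{i}Z(g_{i,j(i)})$ is bounded by $\prod_{i}\deg g_{i,j(i)}$. Summing over tuples gives
\[
\sum_{\text{tuples}}\prod_{i}\deg g_{i,j(i)}\;=\;\prod_{i}\sum_{j}\deg g_{i,j}\;=\;\prod_{i}\deg f_{i}\;\le\;d^{m},
\]
and the case $m>n$ then follows from the dimension-drop argument above, since once $n$ effective cuts have reduced the dimension to $0$ the remaining polynomials cannot create new components, giving the bound $d^{n}$ instead of $d^{m}$.
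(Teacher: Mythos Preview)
Your overall strategy matches the paper's: bound the number of components via degrees of irreducible varieties and the inequality $\sum_{k}\deg Y_{k}\le\deg V\cdot\deg f$ for the components $Y_{k}$ of $V\cap Z(f)$. The paper obtains this inequality by passing to projective closures and citing Hartshorne; your plan to set it up directly in affine space is equivalent in content. (The preliminary factorisation of each $f_{i}$ into irreducibles is unnecessary, by the way: your own identity $\sum_{\text{tuples}}\prod_{i}\deg g_{i,j(i)}=\prod_{i}\deg f_{i}$ shows it recovers exactly the bound one gets by applying the degree inequality to the $f_{i}$ directly.)

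The real gap is the step from $d^{m}$ to $d^{n}$ when $m>n$. The phrase ``once $n$ effective cuts have reduced the dimension to $0$ the remaining polynomials cannot create new components'' is not a valid global statement: different irreducible components of the final intersection are cut down by different subsets of the $f_{i}$, so there is no single set of $n$ effective cuts. Interpreted per component (trace a chain $\mathbb{A}^{n}=V_{0}\supseteq V_{1}\supseteq\cdots\supseteq V_{m}=Y$ of irreducible pieces through the successive intersections), your argument shows only that $\deg Y\le d^{n}$ for each component $Y$, not that there are at most $d^{n}$ of them. The paper's fix is a weighted invariant: one shows by induction on the number of polynomials processed that
\[
\sum_{j}\deg(X_{j})\,d^{\dim X_{j}}\le d^{n},
\]
the point being that an effective cut multiplies the degree by at most $d$ while lowering $d^{\dim}$ by a factor of $d$, and an ineffective cut ($X_{j}\subseteq Z(f)$) changes neither. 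Since every $\deg X_{j}\ge1$ and every $\dim X_{j}\ge\max(0,n-m)$, the bound $d^{\min(m,n)}$ follows at once.
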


This result is well-known, even classic. And yet we could not find
a reference to it in this form. Hence we supply a proof.
\begin{proof}
The literature is far more complete for \emph{projective} varieties.
Hence our first step will be to define the projective space $\mathbb{P}^{n}$
and show how the projective B\'ezout theorem implies the affine one
(we hope no confusion will arise from the use of $\mathbb{P}$ for
``probability'' in other parts of the paper. The use of $\mathbb{P}$
for the projective space will be restricted to the proof of Theorem
\ref{thm:Bezout}).

The projective space $\mathbb{P}^{n}$ over a field $F$ is the space
$F^{n+1}\setminus\{0\}$ (we consider the coordinates $0,\dotsc,n$),
modulo the relation $v\sim av$ for every $v\in F^{n+1}\setminus\{0\}$
and $a\in F\setminus\{0\}$. A projective variety is the intersection
of \foreignlanguage{british}{zeroes} of \emph{homogeneous} polynomials.
Irreducible projective varieties are defined like affine ones, and
the decomposition result that allows to define irreducible components
is as in the affine case (\cite[page 46]{Sh} claims that ``the proof
carries over word-for-word''). We will need two maps between subvarieties
of $\mathbb{A}^{n}$ and $\mathbb{P}^{n}$. The first, restriction,
takes the projective variety $\cap Z(f_{i})$, $f_{i}$ homogeneous
polynomials in $x_{0},\dotsc,x_{n}$ and maps it to the affine variety
$\cap Z(g_{i})$ where $g_{i}(x_{1},\dotsc,x_{n})=f_{i}(1,x_{1},\dotsc,x_{n})$.
The second, \foreignlanguage{british}{homogenisation}, maps an affine
variety $\cap Z(g_{i})$ into the projective variety $\cap Z(f_{i})$
where $f_{i}$ are produced from $g_{i}$ by taking every monomial
$ax_{1}^{b_{1}}\dotsb x_{n}^{b_{n}}$ of $g_{i}$ and mapping it to
$ax_{0}^{b_{0}}\dotsb x_{n}^{b_{n}}$ where $b_{0}=\deg g_{i}-(b_{1}+\dotsb+b_{n})$,
and summing those to get $f_{i}$. We will denote ``$W$ is the restriction
of $V$'' by $W=V\cap\mathbb{A}^{n}$, and ``$V$ is the homogenisation
of $W$'' by $V=\overline{W}$. Clearly $\overline{W}\cap\mathbb{A}^{n}=W$
for any affine $W$.
\begin{claim*}
The restriction of an irreducible projective variety is irreducible.
\end{claim*}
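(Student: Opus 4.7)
The plan is to argue by contradiction. Suppose $V \subset \mathbb{P}^{n}$ is irreducible and $W := V \cap \mathbb{A}^{n}$ decomposes as $W = W_{1} \cup W_{2}$ with $W_{1}, W_{2} \subsetneq W$ proper affine subvarieties. (If $W = \emptyset$ there is nothing to prove, so I assume $W \neq \emptyset$.) I would then lift the decomposition back to $\mathbb{P}^{n}$ by passing to the homogenisations $V_{i} := \overline{W_{i}}$ and letting $H := Z(x_{0}) \subset \mathbb{P}^{n}$ denote the hyperplane at infinity.

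The key step is a set-theoretic splitting according to the coordinate $x_{0}$: every point of $V$ either has $x_{0} \neq 0$, so lies in $\mathbb{A}^{n}$ and hence in $W = W_{1} \cup W_{2} \subseteq V_{1} \cup V_{2}$, or has $x_{0} = 0$, so lies in $H$. This yields
\[
V = (V \cap V_{1}) \cup (V \cap V_{2}) \cup (V \cap H),
\]
a union of three projective subvarieties of $V$ (each an intersection of projective varieties).

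To reach a contradiction I would check that each piece is a proper subvariety. First, $V \cap H \subsetneq V$: were equality to hold, then $V \subseteq H$, forcing $W = \emptyset$, contrary to assumption. Next, the identity $\overline{W_{i}} \cap \mathbb{A}^{n} = W_{i}$ recorded in the excerpt gives $V_{i} \cap \mathbb{A}^{n} = W_{i}$; picking any $x \in W \setminus W_{i}$ (possible since $W_{i} \subsetneq W$), we have $x \in V$ but $x \notin V_{i}$, so $V \cap V_{i} \subsetneq V$. Grouping any two of the three proper pieces then writes $V$ as a union of two proper projective subvarieties, contradicting the irreducibility of $V$.

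I do not anticipate any real obstacle: the argument is essentially formal once one has the affine--projective dictionary from the preceding paragraphs. The only subtlety worth flagging is the disposal of the empty case at the outset, which is needed to ensure $V \not\subseteq H$, so that the $V \cap H$ component is genuinely proper and the splitting does not degenerate.
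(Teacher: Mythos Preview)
Your proof is correct and follows essentially the same approach as the paper: both argue by contradiction, exhibiting the decomposition $V = (\overline{W_{1}} \cap V) \cup (\overline{W_{2}} \cap V) \cup (\{x_{0}=0\} \cap V)$ and checking that each piece is proper via points of $W_{1}\setminus W_{2}$ and $W_{2}\setminus W_{1}$. Your treatment is slightly more explicit about the empty case and the final grouping into two pieces, but the argument is the same.
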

\begin{proof}
Let $V$ be the irreducible projective variety, and let $W=V\cap\mathbb{A}^{n}$.
Assume by contradiction that $W=W_{1}\cup W_{2}$ in a non-trivial
way. We now claim that $(\overline{W_{1}}\cap V)\cup(\overline{W_{2}}\cap V)\cup(\{x_{0}=0\}\cap V)$
is a non-trivial decomposition of $V$. Indeed, this is clearly a
decomposition of $V$, and it is non-trivial because any $(x_{1},\dotsc,x_{n})\in W_{1}\setminus W_{2}$
would satisfy that $(1,x_{1},\dotsc,x_{n})\in\overline{W_{1}}\setminus\overline{W_{2}}$,
and similarly for $W_{2}\setminus W_{1}$. 
\end{proof}
With the claim, the affine B\'ezout theorem follows from the projective
one as follows: Let $W=\cap_{i=1}^{m}Z(f_{i})$ with $\deg f_{i}\le d$.
Then $\overline{W}$ has the same structure, and hence by the projective
B\'ezout theorem its decomposition to irreducible components $\overline{W}=X_{1}\cup\dotsb\cup X_{K}$
satisfies $K\le d^{\min(n,m)}$. By the claim, $X_{i}\cap\mathbb{A}^{n}$
are irreducible, and of course 
\[
W=\overline{W}\cap\mathbb{A}^{n}=(X_{1}\cup\dotsb\cup X_{K})\cap\mathbb{A}^{n}=(X_{1}\cap\mathbb{A}^{n})\cup\dotsb\cup(X_{K}\cap\mathbb{A}^{n})
\]
which is a decomposition of $W$ to irreducible subvarieties (it might
be redundant, but that would only means the number of components of
$W$ is smaller than $K$). Thus we need only show the projective
B\'ezout theorem.

For the projective B\'ezout theorem we will need the concepts of
the dimension and degree of a projective variety. The dimension of
a projective variety is as for an affine variety, and has the same
four properties listed above ($\dim(\mathbb{P}^{n})=n$), with the
same references in \cite{Sh}. As for the degree, heuristically if
$W\subset\mathbb{P}^{n}$ is some irreducible variety then $\deg W$
is the number of intersections of $W$ with a generic linear variety
of dimension $n-\dim W$. Again, the formal definition is different
and we will skip it, the reader may consult \cite[page 50]{H77}.
We only need the following properties to use Theorem \ref{thm:essentially Bezout}
below:
\begin{enumerate}
\item $\deg(W)$ is always a positive integer, except $\deg(\emptyset)=0$. 
\item $\deg(\mathbb{P}^{n})=1$.
\end{enumerate}
For both properties, see \cite[Chapter 1, Propsition 7.6]{H77}. The
projective B\'ezout theorem follows as a corollary from the following
result:
\begin{thm}
\label{thm:essentially Bezout}Let $W$ be an irreducible projective
variety. Let $f$ be a homogeneous polynomial. Let $X_{1},\dotsc,X_{s}$
be the irreducible components of $W\cap Z(f)$. Then
\[
\sum_{j=1}^{s}\deg(X_{j})\le\deg W\cdot\deg f
\]
\end{thm}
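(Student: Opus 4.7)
The plan is to dispose of the degenerate cases first and then reduce the main case to B\'ezout for curves via a generic linear section. If $f$ vanishes identically on $W$, then $W\cap Z(f)=W$ is already irreducible, so $s=1$, $X_1=W$, and the inequality $\deg W\le\deg W\cdot\deg f$ is immediate from $\deg f\ge1$; the remaining degenerate case (a nonzero constant $f$) gives $W\cap Z(f)=\emptyset$ and both sides are $0$. From now on assume $f$ does not vanish on $W$. Then by property (iv) of dimension every component $X_j$ has dimension $r-1$, where $r=\dim W$.

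In the main case, I would choose a linear subspace $L\subset\mathbb{P}^{n}$ of dimension $n-r+1$ in sufficiently general position that (i) $L\cap W$ is an irreducible curve $C$ of degree $\deg W$, (ii) each $L\cap X_{j}$ consists of exactly $\deg X_{j}$ distinct points, (iii) these finite sets are pairwise disjoint, and (iv) $f$ does not vanish on $C$. Existence of such $L$ follows from standard dimension counts on the Grassmannian, as the failure locus for each of (i)--(iv) is a proper subvariety. Consequently
\[
\sum_{j=1}^{s}\deg X_{j}=\sum_{j=1}^{s}|L\cap X_{j}|\le|C\cap Z(f)|.
\]

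It then remains to invoke B\'ezout for curves: an irreducible projective curve $C$ of degree $d$ meets $Z(f)$ in at most $d\cdot\deg f$ points, provided $f$ does not vanish on $C$. This classical one-dimensional statement can be proved by pulling $f$ back to a parametrisation of the normalisation of $C$ and counting its zeros there, or it can be cited. Applied to our $C$ (of degree $\deg W$) and our $f$, it gives $|C\cap Z(f)|\le\deg W\cdot\deg f$, which combines with the display above to finish the argument.

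The main obstacle is that dimension and especially degree are introduced in the paper only heuristically, so pushing the ``generic $L$'' step through rigorously using only the two listed properties is fiddly; one really wants the fact that degree is multiplicative under successive hyperplane sections, plus some flavour of the moving lemma. The cleanest route, and the one I would actually take in a compact write-up, is instead to invoke the full projective B\'ezout theorem (e.g.\ Hartshorne~\cite[Theorem I.7.7]{H77}), which asserts the sharper equality $\sum_{j}i_{j}\deg X_{j}=\deg W\cdot\deg f$ with positive intersection multiplicities $i_{j}\ge1$, and then simply observe that dropping the $i_{j}$ yields the stated inequality.
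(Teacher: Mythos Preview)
Your final recommended route --- invoke Hartshorne's Theorem~I.7.7 and drop the intersection multiplicities $i_j\ge1$ --- is exactly what the paper does; it too treats this theorem as a citation, adding only that the $i_j$ are $\ge1$ because they are defined as lengths of nonzero modules, and disposing of the degenerate cases $W\subset Z(f)$ and $\dim W=0$ in a sentence, just as you do. Your linear-section alternative is a different and viable path, but note that step~(i) implicitly relies on a Bertini-type irreducibility statement (a generic linear section of an irreducible projective variety of dimension $\ge2$ remains irreducible), which is not among the four listed properties of dimension in the paper; so your own instinct that the Hartshorne citation is the cleaner write-up here matches the paper's choice.
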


Where $\deg W$ is the degree of a projective variety just mentioned,
while $\deg f$ is the usual degree of a polynomial. See \cite[Theorem 7.7 and Proposition 7.6d]{H77}.
The formulation in \cite{H77} has some additional quantities, intersection
multiplicities, denoted by $i(\cdot)$ \textemdash{} all we need from
them is that they are at least $1$, which follows because they are
defined as lengths of some modules (\cite{H77}, top of page 53 and
the definition at page 51), and the length of a module is the maximal
size of a decreasing sequence of submodules. The formulation in \cite{H77}
requires that $\dim W\ge1$ and that $f$ is not identically zero
on $W$, but the case $\dim W=0$ (i.e.\ $W$ is a single point)
is obvious, and so is the case $W\subset Z(f)$.

Let now $f_{1},\dotsc,f_{m}$ be polynomials with $\deg f_{i}\le d$,
and let 
\begin{equation}
\bigcap_{i=1}^{m}Z(f_{i})=X_{1}\cup\dotsb\cup X_{K}\label{eq:Xi}
\end{equation}
be the decomposition of $\cap Z(f_{i})$ into irreducible components.
We claim that
\begin{equation}
\sum_{j=1}^{K}\deg(X_{j})d^{\dim X_{j}}\le d^{n}.\label{eq:Bezout}
\end{equation}
We show (\ref{eq:Bezout}) by induction on $m$. Indeed, $m=0$ is
obvious. Assume (\ref{eq:Bezout}) has been proved for $m$ and write
(using the $X_{i}$ of (\ref{eq:Xi}))
\[
\bigcap_{i=1}^{m+1}Z(f_{i})=\bigcup_{j=1}^{K}X_{j}\cap Z(f_{m+1}).
\]
Fix $j$ and let $Y_{j,k}$ be the irreducible components of $X_{j}\cap Z(f_{m+1})$.
By Theorem \ref{thm:essentially Bezout},
\[
\sum_{k}\deg(Y_{j,k})\le d\deg(X_{j}).
\]
If $X_{j}\nsubseteq Z(f_{m+1})$ then by property 4 of the dimension,
$\dim Y_{j,k}=\dim X_{j}-1$ so 
\begin{equation}
\sum_{k}\deg(Y_{j,k})d^{\dim Y_{j,k}}\le\deg(X_{j})d^{\dim X_{j}}.\label{eq:YandX}
\end{equation}
But if $X_{j}\subseteq Z(f_{m+1})$ then (\ref{eq:YandX}) holds trivially
(with no need to invoke Theorem \ref{thm:essentially Bezout}). So
(\ref{eq:YandX}) holds always. We sum (\ref{eq:YandX}) over $j$
to get
\[
\sum_{j,k}\deg(Y_{j,k})d^{\dim Y_{j,k}}\le\sum_{j}\deg(X_{j})d^{\dim X_{j}}\le d^{n}
\]
where the second inequality is the induction assumption. Now, $Y_{j,k}$
is a decomposition of $\cap_{i=1}^{m+1}Z(f_{i})$ to irreducible components
\textemdash{} it may be redundant, but that only reduces the sum in
(\ref{eq:Bezout}) further. Hence (\ref{eq:Bezout}) holds for $m+1$
and the induction is complete.

Theorem \ref{thm:Bezout} now follows easily. We drop the degrees
(as we may, as they are always at least $1$) and get
\[
\sum_{j=1}^{K}d^{\dim X_{j}}\le d^{n}
\]
If $m\ge n$ Theorem \ref{thm:Bezout} follows immediately. If $m<n$
it follows because then each $X_{j}$ has dimension at least $n-m$.
\end{proof}
The next result we need is an effective version of the nullstellensatz.
Hilbert's nullstellensatz is the following: Suppose $p_{i}$ are polynomials
in $n$ variables with $\cap Z(p_{i})=\emptyset$. Then there exists
polynomials $q_{i}$ such that $\sum p_{i}q_{i}\equiv1$.  There
is also a version of the nullstellensatz when $W:=\cap Z(p_{i})\ne\emptyset$.
It states that if $r$ is a polynomial which is zero on every point
of $W$, then there exists a $\nu\ge1$ and $q_{i}$ such that $\sum p_{i}q_{i}=r^{\nu}$.
These theorems hold for any field, but we will need them only for
$\mathbb{Q}$. Multiplying by the common denominator we get a result
that holds in $\mathbb{Z}$, i.e.\ if the $p_{i}$ and the $r$ have
integer coefficients then one may find polynomials $q_{i}$ with integer
coefficients, and integers $\nu$ and $b$ such that $\sum p_{i}q_{i}=br^{\nu}$.
We will need an effective version of this result but, in fact, the
only quantity we need to control is $b$. Hence the effective version
is as follows:
\begin{thm}
\label{thm:nullstellensatz}Let $p_{1},\dotsc,p_{t},r\in\mathbb{Z}[x_{1},\dotsc,x_{n}]$,
assume $r$ vanishes on $\cap_{i=1}^{t}Z(p_{i})$. Assume also that
$\deg p_{i}\le d$ $\forall i$, $\deg r\le d$ and all coefficients
of all $p_{i}$ are bounded by $h$. Then there exists $q_{i}\in\mathbb{Z}[x_{1},\dotsc,x_{n}]$,
$i=1,\dotsc,t$ and $b,\nu\in\mathbb{N}$ such that
\[
\sum_{i=1}^{t}p_{i}q_{i}=br^{\nu}
\]
with the bound
\[
\log b\le C^{n}n^{2n}(d+1)^{n(n+2)}(\log h+Cn^{2}\log d).
\]
\end{thm}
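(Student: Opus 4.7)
The plan is to convert the Nullstellensatz identity into a linear-algebra problem and then invoke Cramer's rule together with Hadamard's determinant bound to control the denominator $b$.

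First, since $r$ vanishes on $\bigcap_i Z(p_i)$, the effective Nullstellensatz of Koll\'ar (applied over $\overline{\mathbb{Q}}$) supplies an exponent $\nu\le d^n$ and polynomials $\widetilde q_i\in\overline{\mathbb{Q}}[x_1,\dots,x_n]$ of degrees at most $D:=(n+1)d^n$ such that $\sum_{i=1}^t p_i\widetilde q_i=r^\nu$. Let $V_k$ denote the $\mathbb{Q}$-vector space of polynomials in $n$ variables of degree $\le k$; the $\mathbb{Q}$-linear map
\[
L\colon V_D^{\,t}\longrightarrow V_{D+d},\qquad L(q_1,\dots,q_t)=\sum_{i=1}^t p_iq_i,
\]
therefore has $r^\nu$ in its image, since the image and kernel of a $\mathbb{Q}$-linear map are unaffected by extension of scalars to $\overline{\mathbb{Q}}$.

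Next, I would set up the equation $L(q)=r^\nu$ as a $\mathbb{Q}$-linear system in the coefficients of the $q_i$. The matrix $M$ of $L$ has entries drawn from the coefficients of the $p_i$, and is therefore bounded entrywise by $h$. Extract a maximal-rank square submatrix $M_0$ of $M$ whose columns suffice to express $r^\nu$. By Cramer's rule, one may then take a rational solution whose denominators divide $b:=|\det M_0|$; clearing the denominator yields the required identity $\sum p_iq_i=br^\nu$ with $q_i\in\mathbb{Z}[x_1,\dots,x_n]$. Note that $b$ depends only on $M$, not on $r$, which is why no bound on the coefficients of $r$ is needed.

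Finally, I would estimate $b$ via Hadamard's inequality. The size $N$ of $M_0$ is at most $\dim V_{D+d}=\binom{D+d+n}{n}$, which for $D=(n+1)d^n$ is crudely bounded by $(Cn(d+1)^n)^n$. Hadamard then gives $|\det M_0|\le(\sqrt N\,h)^N$, so
\[
\log b\ \le\ N\bigl(\tfrac12\log N+\log h\bigr)\ \le\ N\bigl(\log h+n\log(D+d+n)\bigr),
\]
and substituting the bounds for $N$ and $D$ and absorbing lower-order factors into powers of $C$ produces the stated estimate.

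The main obstacle is the arithmetic bookkeeping in the final step: one must apply Koll\'ar's degree bound in a form sharp enough, and chase constants through $N(\log N+\log h)$ to land on exactly the prescribed exponents $n^{2n}$ and $(d+1)^{n(n+2)}$ (my rough computation above actually overshoots slightly in the exponent of $(d+1)$ unless one is careful about how much of the $\nu d$-degree tail of $r^\nu$ really needs to be captured). The conceptual ingredients---Koll\'ar's effective Nullstellensatz, Cramer's rule, and Hadamard's inequality---are standard; the difficulty lies entirely in the constants.
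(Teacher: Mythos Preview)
Your proposal is correct and is essentially the paper's own argument: the paper likewise invokes an effective Nullstellensatz degree bound (citing Brownawell rather than Koll\'ar, obtaining $\deg q_i\le(n+1)(n+2)(d+1)^{n+2}=:Q$), sets up the linear system for the coefficients of the $q_i$, solves via an invertible rank-sized square submatrix so that $b\le\det M$, and bounds this determinant by Hadamard's inequality $(h\sqrt{R})^R$ with $R\le(Q+d+1)^n$. Your concern about overshooting is misplaced---with Koll\'ar's sharper degree bound you would in fact undershoot the paper's stated exponent $(d+1)^{n(n+2)}$, which is exactly what Brownawell's $(d+1)^{n+2}$ produces after raising to the $n^{\text{th}}$ power.
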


Here and below $C$ and $c$ will stand for absolute constants whose
value might change from line to line. We will only use the following,
rough bound, which holds for a fixed $n$ and $d$ sufficiently large
(i.e.\ $d>d_{0}(n)$),
\begin{equation}
\log b\le(2d)^{n(n+2)+1}\log h.\label{eq:ENsimp}
\end{equation}

\begin{proof}
We will find $q_{i}\in\mathbb{Q}[x_{1},\dotsc,x_{n}]$ such that $\sum p_{i}q_{i}=r^{\nu}$
and then $b$ will be bounded by the lcm of the denominators of the
$q_{i}$. By the corollary to Theorem 1 of \cite{B87}, we may take
$q_{i}\in\mathbb{Q}[x_{1},\dotsc,x_{n}]$ with
\begin{equation}
\deg q_{i}\le(n+1)(n+2)(d+1)^{n+2}=:Q.\label{eq:defQ}
\end{equation}
Once the degree is bounded, the coefficients of the $q_{i}$ are
given by the solution of a system of linear equations (depending on
the $p_{i}$, on $r$ and on $\nu$). Let $f(d,n)$ be the dimension
of the space of polynomials with $n$ variables and degree $\le d$
(so $f(d,n)\le(d+1)^{n}$). The system might be underdetermined, we
have $tf(Q,n)$ variables and at most $f(Q+d,n)$ equations, one for
each coefficient of one monomial in the equality $\sum p_{i}q_{i}=r^{\nu}$,
up to the degree of the left-hand side. Let $R$ be the rank of this
system of equations, so $R\le f(Q+d,n)$. Pick arbitrarily $R$ variables
and $R$ equations such that the corresponding submatrix $M$ is invertible
and solve the restricted equations. Set the rest of the variables
to zero, and the remaining equations (if any) will be fulfilled automatically.
It follows that some choice of the $q_{i}$ can be achieved by inverting
$M$ and applying the result to the vector of the coefficients of
$r^{\nu}$, themselves integers. Since $M^{-1}=M'/\det M$, where
$M'$ has integer entries, we may bound $b\le\det M$. But the entries
of $M$ are simply the coefficients of the $p_{i}$, all of them bounded
by $h$. Applying Hadamard's inequality (the determinant is bounded
by the product of the $l^{2}$ norms of the rows) gives
\[
b\le\left(h\sqrt{R}\right)^{R}
\]
or
\begin{align*}
\log b & \le R(\log h+\frac{1}{2}\log R)\le f(Q+d,n)(\log h+\frac{1}{2}\log f(Q+d,n))\\
 & \le(Q+d+1)^{n}(\log h+\frac{1}{2}n\log(Q+d+1))\\
 & \stackrel{\textrm{(\ref{eq:defQ})}}{\le}\left((n+1)(n+2)(d+1)^{n+2}+d+1\right)^{n}\cdot\\
 & \qquad\qquad\cdot\;(\log h+\frac{1}{2}n\log((n+1)(n+2)(d+1)^{n+2}+d+1))\\
 & \le C^{n}n^{2n}(d+1)^{n(n+2)}(\log h+Cn^{2}\log d)
\end{align*}
as claimed.
\end{proof}

\section{Proof of the main result}
\begin{lem}
\label{lem:half}Let $G$ be any $d$-regular connected multigraph
with $d\ge4$ and more than 2 vertices, and let $x$ be some vertex
of $G$. Let $t>1$. Then 
\[
\mathbb{P}^{x}(N(t)=x)\le\frac{d-2}{d-1}
\]
where $N(t)$ is a nonbacktracking random walk on $G$ at the $t^{\textrm{th}}$
step and $\mathbb{P}^{x}$ denotes the probability when $N(0)=x$.
\end{lem}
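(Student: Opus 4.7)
Plan: Let $A_t := \mathbb{P}^x(N(t) = x)$ and $W_t(v) := \mathbb{P}^x(N(t) = v)$, and let $k_{v,x}$ denote the number of edges between $v$ and $x$. Viewing the nonbacktracking walk as a Markov chain on directed edges, conditioning on the directed edge $X_t$ at time $t$ and then splitting according to its target yields the identity
$$A_{t+1} \;=\; \frac{1}{d-1}\bigl(K_t - A_{t-1}\bigr), \qquad K_t \;:=\; \sum_v k_{v,x}\,W_t(v),$$
with the convention $A_0 := 1$. The term $-A_{t-1}$ comes from the fact that among the $d-1$ non-backtracking continuations, the excluded (backtracking) edge has source $x$ precisely on the event $\{N(t-1)=x\}$. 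Two structural observations are crucial. First, $k_{v,x}\leq d-1$ for every $v$, because $k_{v,x}=d$ would force $\{x,v\}$ to be an isolated component, contradicting that $G$ is connected and has more than two vertices. Second, since $\sum_v k_{v,x}=d$, at most one vertex $v^*$ can satisfy $k_{v^*,x}=d-1$, and if such a $v^*$ exists there is exactly one other neighbour $u$ of $x$ with $k_{u,x}=1$.

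The base case $t=2$ is a direct computation: $A_2=\frac{1}{d(d-1)}\sum_v k_{v,x}(k_{v,x}-1)$, which under the two constraints above is maximised at the extremal profile $(d-1,1,0,\dots)$, giving $A_2\leq (d-2)/d<(d-2)/(d-1)$.

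For the inductive step I would split according to whether a bad vertex exists. If every $k_{v,x}\leq d-2$, then $K_t\leq (d-2)(1-A_t)\leq d-2$, so the identity directly yields $A_{t+1}\leq (d-2)/(d-1)$. Otherwise $v^*$ exists, $K_t=(d-1)W_t(v^*)+W_t(u)$, and only the crude bound $K_t\leq (d-1)(1-A_t)$ is available; the plan is then to carry the auxiliary invariant $(d-1)A_t+A_{t-1}\geq 1$, which converts the crude bound into $K_t\leq d-2+A_{t-1}$ and hence into $A_{t+1}\leq (d-2)/(d-1)$. This invariant is an equality at $t=1$; at $t=2$ it reduces to $(d-1)(d-2)/d\geq 1$, i.e.\ $d^2-4d+2\geq 0$, which is exactly the point at which the hypothesis $d\geq 4$ is needed (it fails for $d=3$, explaining the restriction in the statement).

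The main obstacle is the inductive step for the auxiliary invariant in the bad-vertex case. Intuitively, in this very constrained graph $x$ has only two neighbours and the walk is essentially forced to bounce along the multi-edge $x$-$v^*$, so $A_t$ cannot dip too low; making this rigorous requires a lower bound on $W_t(v^*)$ in terms of $A_{t-1}$, obtained by conditioning on the identity of the incoming edge at time $t-1$ and using that from $x$, whichever of the two possible incoming edges we arrived by, at least a $(d-2)/(d-1)$ fraction of the non-backtracking successors go to $v^*$.
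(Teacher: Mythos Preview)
Your approach is genuinely different from the paper's, and the gap you yourself flag as ``the main obstacle'' is a real one that your sketch does not close.

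The paper does not track vertex probabilities at all. It works with the directed-edge probabilities $q_t(e)=\mathbb P(\text{edge }e\text{ traversed at step }t)$ and their maximum $Q(t)=\max_e q_t(e)$. A one-line computation shows $Q$ is non-increasing in $t$. Then a direct case analysis at a single small time gives the bound: in your ``good case'' (every neighbour connected to $x$ by $\le d-2$ edges) one gets $Q(2)\le(d-2)/(d(d-1))$ immediately; in your ``bad case'' one computes $\mathbb P(N(2)=x)=(d-2)/d$ and $\mathbb P(N(2)=z)\le 2/d$ for $z\ne x$, whence $Q(3)\le\max\{2,d-2\}/(d(d-1))$, and here $d\ge 4$ enters. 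Since $Q$ is non-increasing, $Q(t)\le(d-2)/(d(d-1))$ for all $t\ge 3$, and $\mathbb P(N(t)=x)=\sum_{e:\to x}q_t(e)\le d\,Q(t)$ finishes. No induction, no auxiliary invariant.

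Your identity $A_{t+1}=(K_t-A_{t-1})/(d-1)$ is correct, and in the good case your argument is complete: $K_t\le d-2$ gives the result outright. The trouble is the bad case. Your sketch produces $W_t(v^*)\ge\frac{d-2}{d-1}A_{t-1}$ and hence $K_t\ge(d-2)A_{t-1}$; feeding this into the recursion gives
\[
(d-1)A_{t+1}+A_t \;=\; K_t-A_{t-1}+A_t \;\ge\; (d-3)A_{t-1}+A_t,
\]
so to propagate the invariant you would need $(d-3)A_{t-1}+A_t\ge 1$. But the inductive hypothesis is $(d-1)A_t+A_{t-1}\ge 1$, and for $d=4$ the latter (namely $3A_t+A_{t-1}\ge 1$) does \emph{not} imply the former (namely $A_{t-1}+A_t\ge 1$): consider $A_t$ close to $1/3$ and $A_{t-1}$ close to $0$. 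Thus the induction does not close with the bound you propose. To rescue this route you would have to carry a stronger joint invariant, for instance a simultaneous lower bound on $W_t(v^*)$ or on $A_t+W_t(v^*)$, which in effect means tracking the walk on the pair $\{x,v^*\}$ rather than at $x$ alone. The paper's edge-based quantity $Q(t)$ is precisely the device that makes this bookkeeping unnecessary.
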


Let us define precisely what we mean by ``multigraph'' and ``nonbacktracking
random walk''. A multigraph is a graph which might contain multiple
edges and self-loops. It is $d$-regular if every vertex has exactly
$d$ edges connected to it, with a self-loop counted as two edges.
A nonbacktracking random walk is a walk that is not allowed to traverse
an edge and on the next step traverse it in the opposite direction
(there are no restrictions on the first step). A self-loop can be
traversed in either direction, and the nonbacktracking condition is
that it cannot be traversed and then traversed backwards. When the
multigraph is $d$-regular, this process has exactly $d-1$ possibilities
at each step (except the first one), and it chooses each with probability
$1/(d-1)$, independently of the past.
\begin{proof}
Fix the vertex $x$ for the rest of the proof. Every edge of our multigraph
we consider as two directed edges (a self-loop too corresponds to
two directed edges), and for a directed edge $e$ we denote by $\overline{e}$
the inverted edge. Hence, the non-backtracking condition is that the
walk is not allowed to traverse $e$ immediately after traversing
$\overline{e}$. (note that we have a multigraph, so there can be
$e\ne f$ that both go from vertex $x$ to vertex $y$. Still, we
may traverse $\overline{e}$ and then $f$, or $\overline{f}$ and
then $e$. It is only the couples $\overline{e},$$e$ and $\overline{f}$,
$f$ that are prohibited. Each self-loop corresponds to two directed
edges which are $\overline{\cdot\vphantom{e}}$ of one another). Let
$q_{t}(e)$ be the probability that the edge $e$ was traversed at
time $t$ i.e.\ if $e:v\to w$ (i.e., $e$ is from $v$ to $w$,
we will also use the notation $e:\to v$ and $e:v\to$ if we do not
care about the other vertex) then it is the probability that $N(t-1)=v$
and then the process continues through $e$ (which means, in particular,
that $N(t)=w$). Let $Q(t)=\max_{e}q_{t}(e)$. Then $Q(t)$ is non-increasing
because
\[
q_{t+1}(e)=\frac{1}{d-1}\sum_{f:\to v,\,f\ne\overline{e}}q_{t}(f)\le Q(t)\qquad\forall e:v\to.
\]
Examine now the event that $e:y\to z$ was traversed in the second
step. It requires that $N(1)=y$. Assume first (call this ``case
I'') that each neighbour $y$ of $x$ is connected to $x$ by $\le d-2$
edges (including $x$ itself, if there are self-loops). Then $\mathbb{P}(N(1)=y)\le(d-2)/d$
for every $y$ and hence $Q(2)\le(d-2)/(d(d-1))$. 

If $x$ has a neighbour $y$ to which it is connected by more than
$d-2$ edges (``case II''), then the requirements of regularity
and more than 2 vertices say that it must be connected to $y$ by
exactly $d-1$ edges, and further that it has a second neighbour to
which it is connected by $1$ edge. This means that $\mathbb{P}(N(2)=x)=(d-2)/d$
and, in particular, any vertex $z\ne x$ we have $\mathbb{P}(N(2)=z)\le2/d$.
Hence
\[
Q(3)\le\frac{\max\{2,(d-2)\}}{d(d-1)}
\]
Since $d\ge4$ we get the same bound as in case I. Hence $Q(t)\le(d-2)/(d(d-1))$
for all $t\ge3$. But this means that
\[
\mathbb{P}(N(t)=x)=\sum_{e:\to x}q_{t}(e)\le dQ(t)\le\frac{d-2}{d-1}.
\]
This covers all cases of the lemma except $t=2$ in case II, but we
just calculated that in this case $\mathbb{P}(N(2)=x)=(d-2)/d$. The
lemma is thus proved.
\end{proof}
The next lemma is quite close to the formulation of Theorem \ref{thm:poly},
the only difference is that it handles only one field.
\begin{lem}
\label{lem:one F}Let $F$ be an algebraically closed field, let $k,m\in\mathbb{N}$,
$m\ge2$ and let $l$ be sufficiently large (i.e.\ $l>l_{0}(k,m)$).
Let $R$ be given by taking $u\ge15m^{3}k^{4}\log l$ random reduced
words of length $l$ in the letters $\{x_{1},\dotsc,x_{m},x_{1}^{-1},\dotsc,x_{m}^{-1}\}$
independently, uniformly, with repetitions. Let $\Gamma=\langle x_{1},\dotsc,x_{m}|R\rangle$.
Then
\[
\mathbb{P}(\exists\rho:\Gamma\to\mathrm{GL}_{k}(F)\text{ such that }|\rho(\Gamma)|>2)\le\exp(-cu/mk^{2})
\]
where $\rho$ is a group homomorphism.
\end{lem}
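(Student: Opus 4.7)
The plan is to combine an individual-representation bound based on non-backtracking random walks with algebraic-geometric control via B\'ezout's theorem and, if needed, the effective nullstellensatz. I would first parameterise $\rho : F_m \to \mathrm{GL}_k(F)$ by tuples $(A_1,\dotsc,A_m) \in \mathrm{GL}_k(F)^m$ embedded as an affine variety $V_0 \subseteq \mathbb{A}^{N}$, $N = m(k^2+1)$, via auxiliary coordinates $d_i = \det(A_i)^{-1}$ subject to $d_i \det A_i = 1$. For each reduced word $w$ of length $l$, the matrix $\rho(w)-I$ has entries that are polynomials in the coordinates of degree $O(kl)$ with integer coefficients of bounded size; so $W_w := \{\rho : \rho(w) = I\}$ is cut out by $k^2$ such polynomials, and $V_R := V_0 \cap \bigcap_{r \in R} W_r$ is the representation variety of $\Gamma$. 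Representations with image of size at most $2$ form a closed subvariety $G \subseteq V_0$: the trivial representation together with, for each nonempty $S \subseteq \{1,\dotsc,m\}$, a copy of the involution variety $\{g \in \mathrm{GL}_k(F) : g^2 = 1\}$ assigning $x_i \mapsto g$ for $i \in S$ and $x_i \mapsto 1$ otherwise. Call an irreducible component of $V_R$ \emph{bad} if it is not contained in $G$; the lemma is equivalent to showing that $V_R$ has no bad component with the stated probability.

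The per-representation input is that for any fixed bad $\rho_0 \in V_0$, i.e.\ one with $|\rho_0(F_m)| > 2$, the Cayley graph of $\rho_0(F_m)$ with respect to the generators $\rho_0(x_i)$ is a $2m$-regular multigraph (self-loops and multi-edges accommodate coincidences among the $\rho_0(x_i^{\pm 1})$) with more than two vertices. A uniformly random reduced word of length $l \geq 2$ on the $x_i$ traces a non-backtracking random walk of length $l$ on this graph, so Lemma~\ref{lem:half} gives $\mathbb{P}(\rho_0(w) = I) \leq q := (2m-2)/(2m-1)$, and by independence of the $u$ relators, $\mathbb{P}(\rho_0 \in V_R) \leq q^u$. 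To promote this to a statement about \emph{positive-dimensional} bad components, I would iterate: set $V_i := V_0 \cap \bigcap_{j \leq i} W_{r_j}$ and, for each bad component $X$ of $V_u$, trace back the chain $(Y_0,\dotsc,Y_u = X)$ with $Y_i$ the unique irreducible component of $V_i$ containing $X$. Each $Y_i$ contains the bad variety $X$ and is therefore bad; picking any bad $\rho \in Y_u$ and using $Y_u \subseteq V_u$ yields $\mathbb{P}(Y_u \subseteq V_u) \leq q^u$.

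To finish, by Theorem~\ref{thm:Bezout} applied to the polynomials defining $V_u$ (all of degree $\leq O(kl)$ in $\mathbb{A}^N$), the number of irreducible components of $V_u$, and hence of bad components, is deterministically at most $(Ckl)^N$. Combining this with the per-chain probability bound formally yields $\mathbb{E}[|\mathcal{B}_u|] \leq (Ckl)^N \cdot q^u$, and by Markov's inequality the probability that $V_R$ has a bad component is at most the same quantity. Since $\log(1/q) \geq c/m$, $N = O(mk^2)$, and $u \geq 15 m^3 k^4 \log l$, this is bounded by $\exp(-cu/(mk^2))$. The main obstacle I anticipate is this last bookkeeping step: linearity of expectation in the form $\mathbb{E}[|\mathcal{B}_u|] \leq (\text{\# bad chains}) \cdot q^u$ is subtle because the chains themselves depend on the random relators, and over algebraically closed $F$ there is no a priori finite enumeration of irreducible subvarieties of bounded degree. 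I expect the clean resolution to use the effective nullstellensatz (Theorem~\ref{thm:nullstellensatz}) to extract, from any bad component, a witness bad representation of bounded algebraic complexity over the prime field, yielding a finite candidate set of size $(Ckl)^{O(N)}$ to which the union bound applies directly; once that discretisation is in place, the rest is a routine exponential estimate.
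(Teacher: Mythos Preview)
Your setup is essentially the paper's: embed $\mathrm{GL}_k(F)^m$ as an affine variety, note that each relator cuts out a subvariety by polynomials of degree $O(l)$, and use Lemma~\ref{lem:half} to get the per-point bound $\mathbb{P}(\rho_0(w)=I)\le q=(2m-2)/(2m-1)$ for any fixed $\rho_0$ with $|\rho_0(F_m)|>2$. You also correctly put your finger on the real issue: one cannot simply union-bound over the bad components of $V_u$, because those components are themselves functions of the randomness, and there is no a~priori finite list of candidate components over an algebraically closed field.

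Where the proposal goes wrong is in the fix. The effective nullstellensatz does not give a discretisation of the kind you describe; it bounds the integer $b$ in an identity $\sum p_iq_i=br^\nu$, which is useful for transferring between characteristics (this is exactly how the paper uses it in the proof of Theorem~\ref{thm:poly}), but it does not produce a finite candidate set of ``witness representations of bounded complexity'' to union-bound over. The paper's resolution is much more elementary and avoids the nullstellensatz entirely: process the relators in $2mk^2+1$ \emph{batches} of size $\lambda=u/(2mk^2+1)$ and reduce the maximal dimension of a bad component by one per batch. At the start of a batch, condition on all previous relators; then $E_j$ is a fixed variety, and B\'ezout bounds its irreducible components by $l^{2mk^2}$. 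For each bad component $X$ of $E_j$, fix once and for all a bad point $(A,A^{-1})\in X$; the next $\lambda$ relators are independent of this choice, and each fails at $(A,A^{-1})$ with probability $\ge 1/(2m-1)$, so $X$ is broken (hence its dimension drops by property~(iv)) with probability $\ge 1-e^{-\lambda/2m}$. Now the union bound over the $l^{2mk^2}$ components is legitimate, and $\lambda\ge 5m^2k^2\log l$ makes it go through. After $2mk^2+1$ batches the maximal bad dimension is $-1$, i.e.\ there are no bad components. The conditioning-then-batching trick is the missing idea; once you have it, no discretisation or nullstellensatz is needed for this lemma.
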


(log here is the natural logarithm).
\begin{proof}
We consider $\mathrm{GL}_{k}(F)$ as a subvariety of $F^{2k^{2}}$
by considering the first set of $k^{2}$ variables as the entries
of the matrix and the second set of $k^{2}$ variables as the entries
of the inverse matrix, and adding polynomial equations ($k^{2}$ of
them, all of degree 2) that ensure that indeed, the product of the
two matrices is $1$. Similarly we consider $\textrm{GL}_{k}(F)\times\dotsb\times\textrm{GL}_{k}(F)$
($m$ times) as a subvariety of $F^{2mk^{2}}$. Denote this variety
by $X$. For $A\in\mathrm{GL}_{k}(F)\times\dotsb\times\mathrm{GL}_{k}(F)$
we denote $(A,A^{-1}):=(A_{1},A_{1}^{-1},\dotsc,A_{m},A_{m}^{-1})\in X$.

Let $E_{j}\subset X$ be the collection of $(A,A^{-1})$ such that
the matrices $A_{1},\dotsc,A_{m}$ satisfy the first $j$ words in
$R$. Since these (random) words can be thought of as (random) polynomial
equations in $2mk^{2}$ variables as above, $E_{j}$ is a (random)
variety in $F^{2mk^{2}}$. Let $(A,A^{-1})$ be a point in $E_{j}$
with $|\langle A\rangle|>2$. Examine the event that $(A,A^{-1})\in E_{j+1}$,
conditioned on $E_{j}$. The new reduced word $\omega$ that was added
to form $E_{j+1}$ is independent of the past, and hence $\omega(A)$
is distributed like a nonbacktracking random walk of length $l$ on
the Cayley graph generated by $A$ (if for some $i$ $A_{i}=1$ the
graph will contain one corresponding self-loop on each vertex. The
two directions of this self-loop will correspond to multiplying by
$A_{i}$ and $A_{i}^{-1}$. This matches with the definitions we gave
around Lemma \ref{lem:half}). This Cayley graph is a $2m$-regular
multigraph, and by assumption it has more than 2 vertices. Hence we
may use Lemma \ref{lem:half} to get
\[
\mathbb{P}((A,A^{-1})\in E_{j+1}\,|\,(A,A^{-1})\in E_{j})\le\frac{2m-2}{2m-1}.
\]
In other words, with probability $\ge\frac{1}{2m-1}$, adding one
relation breaks the irreducible component containing $(A,A^{-1})$
into further irreducible components, which then must have smaller
dimension, by property (\ref{enu:dim decrease}) of the dimension
(see \S \ref{sec:AG}).

Repeating this $\lambda$ times we get that after adding $\lambda$
words we break any fixed irreducible component with probability at
least $1-\exp(-\lambda/2m)$. By Bezout's theorem (Theorem \ref{thm:Bezout}),
$E_{j}$ has no more than $l^{2mk^{2}}$ irreducible components (the
initial polynomial equations defining $X$ have degree 2). Hence a
simple union bound shows that, for $\lambda\ge5m^{2}k^{2}\log l$,
\[
\mathbb{P}(\textrm{all components are broken})\ge1-l^{2mk^{2}}\exp\left(-\frac{\lambda}{2m}\right)\ge1-\exp\left(-\frac{\lambda}{10m}\right).
\]
Use that for $\lambda=u/(2mk^{2}+1)\ge5m^{2}k^{2}\log l$ words, and
get that with probability at least $1-\exp(-cu/mk^{2})$ one breaks
all components. Therefore the maximal degree decreases by $1$. Repeating
this a further $2mk^{2}+1$ times, the maximal degree of any component
which contains any $(A,A^{-1})$ with $|\langle A\rangle|>2$ is $-1$,
so they are in fact empty. We get the claim of the lemma with probability
$(2mk^{2}+1)\exp(-cu/mk^{2})$ but of course the outer $2mk^{2}+1$
can be ignored (perhaps changing the constant inside the exponent).
\end{proof}
\begin{thm}
\label{thm:poly}Let $k,l,m\in\mathbb{N}$, $m\ge2$. Let $R$ be
be given by taking at least $(3l)^{7m^{2}k^{4}}$ random reduced words
of length $l$ in the letters $\{x_{1},\dotsc,x_{m},x_{1}^{-1},\dotsc,x_{m}^{-1}\}$
independently, uniformly, with repetitions. Let $\Gamma=\langle x_{1},\dotsc,x_{m}|R\rangle$.
Then
\[
\lim_{l\to\infty}\mathbb{P}(\exists F,\rho:\Gamma\to\mathrm{GL}_{k}(F)\text{ such that }|\rho(\Gamma)|>2)=0
\]
where $F$ runs over the all fields, and where $\rho$ is a group
homomorphism.
\end{thm}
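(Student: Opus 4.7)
My plan is to upgrade Lemma \ref{lem:one F}, which handles a single algebraically closed field, to all fields simultaneously, by combining it with the effective nullstellensatz. The argument proceeds in three steps. First, since any representation $\rho: \Gamma \to \mathrm{GL}_k(F)$ extends to $\overline{F}$ without changing its image, I may restrict attention to algebraically closed $F$. Next, I describe the ``good'' locus $Y_0 := \{(A, A^{-1}) \in X : |\langle A \rangle| \leq 2\}$ as a $\mathbb{Z}$-defined Zariski-closed subvariety of $X$: $A \in Y_0$ iff there exists $i_0 \in \{1, \dotsc, m\}$ with $A_{i_0}^2 = I$ and $A_i \in \{I, A_{i_0}\}$ for every $i$. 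This realises $Y_0$ as a union of $m$ subvarieties, each cut out by degree-$2$ integer polynomials, so $Y_0$ itself is cut out by integer polynomials $g_1, \dotsc, g_s$ of degree $\leq 2m$ whose coefficients are bounded in terms of $m, k$ alone. The conclusion of Lemma \ref{lem:one F} over $F$ is exactly $E_u(F) \subseteq Y_0(F)$, i.e., every $g_i$ vanishes on $E_u(F)$; by Hilbert's nullstellensatz and flat base change, this property depends only on $\mathrm{char}(F)$, so it is enough to verify it for $F = \overline{\mathbb{Q}}$ and each $F = \overline{\mathbb{F}_p}$.

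Second, I apply Lemma \ref{lem:one F} to $F = \overline{\mathbb{Q}}$ with $u \geq (3l)^{7m^2k^4}$ (well above the $15m^3k^4\log l$ threshold), obtaining $E_u(\overline{\mathbb{Q}}) \subseteq Y_0(\overline{\mathbb{Q}})$ with probability $\geq 1 - \exp(-cu/(mk^2))$. Thus each $g_i$ vanishes on $\cap_j Z(p_j)$ over $\overline{\mathbb{Q}}$, where $p_j \in \mathbb{Z}[\ldots]$ are the defining polynomials of $E_u$ (of degree $\leq l$, coefficients in $\{0, \pm 1\}$, in $n = 2mk^2$ variables). Theorem \ref{thm:nullstellensatz} then supplies integers $b_i, \nu_i \geq 1$ and polynomials $q_{ij} \in \mathbb{Z}[\ldots]$ with $b_i g_i^{\nu_i} = \sum_j q_{ij} p_j$ and $\log b_i \leq l^{n(n+2) + O(1)} = l^{4m^2k^4 + O(mk^2)}$. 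Let $B := \prod_i b_i$. For any prime $p \nmid B$ the identity reduces mod $p$ with $b_i$ invertible, giving $E_u \subseteq Y_0$ over $\overline{\mathbb{F}_p}$; for the at most $\log_2 B$ primes $p \mid B$, apply Lemma \ref{lem:one F} to $F = \overline{\mathbb{F}_p}$ directly.

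Union-bounding the failure probabilities over $\overline{\mathbb{Q}}$ and the at most $\log_2 B$ exceptional primes, the total is at most $(1 + \log_2 B)\exp(-cu/(mk^2)) \leq l^{4m^2k^4 + O(mk^2)} \exp\bigl(-c(3l)^{7m^2k^4}/(mk^2)\bigr)$, which tends to $0$ as $l \to \infty$. The main obstacle is the quantitative match: the hypothesis $u \geq (3l)^{7m^2k^4}$ is tailored precisely so that $7m^2k^4$ exceeds $n(n+2) = 4m^2k^4 + 4mk^2$ arising from the effective nullstellensatz with $n = 2mk^2$, leaving enough slack to absorb the polynomial factor $1 + \log_2 B$. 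The only other ingredient requiring care is the explicit description of $Y_0$ as a $\mathbb{Z}$-variety with bounded data, which is a routine algebraic exercise.
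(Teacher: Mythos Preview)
Your overall architecture is essentially the same as the paper's: apply Lemma~\ref{lem:one F} in characteristic zero, use the effective nullstellensatz to push the conclusion to all but finitely many characteristics, then invoke Lemma~\ref{lem:one F} again for the exceptional primes. The description of $Y_0$ over $\mathbb{Z}$ and the reduction to one algebraically closed field per characteristic are fine.

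There is, however, a genuine gap in your union bound. You write that the bad primes are the divisors of $B=\prod_i b_i$, of which there are at most $\log_2 B$, and then union-bound the failure of Lemma~\ref{lem:one F} over this set. But $B$, and hence the set of primes dividing it, is a \emph{random} quantity determined by the very same words $R$ you are taking probabilities over; you cannot apply a union bound over a random index set. Concretely, $\sum_{p\mid B}\mathbb{P}(\mathcal{B}_p)$ is not a meaningful expression when $\{p:p\mid B\}$ depends on the sample. The correct move (which the paper makes) is to use the \emph{deterministic} nullstellensatz bound $\log b_i\le (2l)^{n(n+2)+1}\log h$: on the characteristic-zero success event, every prime exceeding this bound is automatically good, so one unions over \emph{all} primes below it. That is roughly $B$ primes, not $\log_2 B$, and the prefactor in your final estimate should be $\exp\bigl(l^{4m^2k^4+O(mk^2)}\bigr)$ rather than $l^{4m^2k^4+O(mk^2)}$. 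This is precisely why the hypothesis demands $u\ge (3l)^{7m^2k^4}$: the exponent $7m^2k^4>4m^2k^4+4mk^2$ is needed so that $\exp(cu/mk^2)$ beats this \emph{exponential} prefactor. Your stated reason (``absorb the polynomial factor $1+\log_2 B$'') would be satisfied by any $u\to\infty$, so it cannot be what drives the bound.

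A minor point: the coefficients of the word polynomials are not in $\{0,\pm1\}$; an entry of a product of $l$ matrices of size $k$ is a sum of $k^{l-1}$ monomials, and repetitions can make coefficients as large as $k^{l-1}$ (the paper uses the crude bound $h\le(2mk^2)^l$). This does not affect the argument since $\log h$ is still linear in $l$, but it should be stated correctly.
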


\begin{proof}
First apply Lemma \ref{lem:one F} with $F=\mathbb{C}$ and with $15m^{3}k^{4}\lceil\log l\rceil$
relators. We get that with high probability, any $\rho:\Gamma\to\mathrm{GL}_{k}(\mathbb{C})$
has $|\rho(\Gamma)|\le2$. Of course, the image of the generators
$\{x_{1},\dotsc,x_{m}\}$ is easy to characterise: we have $\rho(x_{i})^{2}=1$
$\forall i$ and for some $S\subset\{1,\dotsc,m\}$ we have $\rho(x_{i})=\rho(x_{j})$
$\forall i,j\in S$ and $\rho(x_{i})=1$ $\forall i\not\in S$. Compactly,
$|\{\rho(x_{i})\}\setminus\{1\}|\le1$.

Recall from the proof of Lemma \ref{lem:one F} the variety $X$ in
$\mathbb{C}^{2mk^{2}}$ and denote $p_{1},\dotsc p_{mk^{2}}$ the
polynomials defining it; and the notation $(A,A^{-1})$. The conditions
that matrices $A$ satisfy a given random word is a polynomial in
$2mk^{2}$ variables. Denote the polynomial that corresponds to the
$i^{\textrm{th}}$ word by $p_{mk^{2}+i}$ and let $M=mk^{2}+15m^{3}k^{4}\lceil\log l\rceil$.
Note that each of these polynomials has integer coefficients. We get
that 
\[
\bigcap_{i=1}^{M}Z(p_{i})=\{(A,A^{-1}):A_{i}^{2}=1\;\forall i,|A\setminus\{1\}|\le1\}.
\]
Denote the variety on the right by $Y$.

We now claim that $Y$ can by written as $\cap Z(r_{j})$ for some
$r_{1},\dotsc,r_{K}$ which depend only on $k$ and $m$, and in particular
do not depend on the field. Here is how: the condition $A_{i}^{2}=1$
corresponds to $k^{2}$ polynomials for each $i$. The condition 
\[
\bigcup_{S}\{A_{i}=A_{j}\forall i,j\in S,A_{i}=1\forall i\not\in S\}.
\]
gives us at most $(mk^{2})^{2^{m}}$ polynomials because for every
$S$ the corresponding variety is described by at most $mk^{2}$ polynomials,
but taking union requires to take every possible choice of a polynomial
for each $S$, and multiply them out. This describes our $r_{1},\dotsc,r_{K}$
(and gives $K\le mk^{2}+(mk^{2})^{2^{m}}$, but we will have no use
for this fact).

Now apply the effective nullstellensatz (Theorem \ref{thm:nullstellensatz})
$K$ times as follows. In all applications the polynomials $p_{i}$
from the nullstellensatz are our $p_{i}$, but the polynomial $r$
we take corresponding to the $r_{j}$ above. We get corresponding
$q_{i,j}$, $\nu_{j}$ and $b_{j}$, with the $b_{j}$ all satisfying
some bound, which we denote by $B$. Recall (\ref{eq:ENsimp}). The
number of variables is $2mk^{2}$ while the maximal value of the coefficients,
$h$, can be bounded roughly by $(2mk^{2})^{l}$. We get, 
\[
B\le\exp\left((2l)^{4m^{2}k^{4}+4mk^{2}+1}\cdot l\log(2mk^{2})\right)\le\exp\left((2l)^{7m^{2}k^{4}}\right)
\]
which holds for $l$ sufficiently large.

Consider now a field $F$ of characteristic larger than $B$. Then
\[
\sum p_{i}q_{i,j}=b_{j}r_{j}^{\nu_{j}}
\]
holds also in $F$, and because $\textrm{char}\,F>B\ge b_{j}$ we
get that $b_{j}\ne0$ in the field and we may divide by them. This
means that whenever $p_{i}=0$ for all $i$ so are $r_{j}$ for all
$j$, but that means that any $A$ which satisfy our first $15m^{3}k^{4}\lceil\log l\rceil$
words must also satisfy that $A_{i}^{2}=1$ $\forall i$ and that
$|A\setminus\{1\}|\le1$. So in $\textrm{GL}_{k}(F)$ too we get $|\langle A\rangle|\le2$.

Finally, for every prime $\tau$ smaller than $B$ apply Lemma \ref{lem:one F}
again, but this time with the field $F_{\tau}$ being the algebraic
closure of $\mathbb{Z}/\tau\mathbb{Z}$ and with $u=\lambda mk^{2}(2l)^{7m^{2}k^{4}}$
for some $\lambda$ to be fixed soon. We get that 
\[
\mathbb{P}(\exists\rho:\Gamma\to\mathrm{GL}_{k}(F_{\tau})\text{ s.t. }|\rho(\Gamma)|>2)\le\exp(-cu/mk^{2})=\exp(-c\lambda(2l)^{7m^{2}k^{4}}).
\]
Summing over all $\tau<B$ gives
\begin{multline*}
\mathbb{P}(\exists\tau<B\exists\rho:\Gamma\to\textrm{GL}_{k}(F_{\tau})\text{ such that }|\rho(\Gamma)|>2)\le B\exp(-c\lambda(2l)^{7m^{2}k^{4}})\\
\le\exp((1-c\lambda)(2l)^{7m^{2}k^{4}}).
\end{multline*}
Taking $\lambda=2/c$ we get that this probability goes to zero. Moving
from $F_{\tau}$ to a general field of characteristic $\tau$ is done
using the (usual, non-effective) nullstellensatz: find polynomials
$q_{i,j}\in\mathbb{Z}/\tau\mathbb{Z}[x_{1},\dotsc,x_{2mk^{2}}]$ such
that $\sum p_{i}q_{i,j}=r_{j}^{\nu_{j}}$ in $\mathbb{Z}/\tau\mathbb{Z}$
with the same $p_{i}$ and $r_{1},\dotsc,r_{K}$ as above, and note
that the existence of these $q_{i,j}$ ensures that in any field $F$
of characteristic $\tau$, if $A_{1},\dotsc,A_{m}$ are in $\textrm{GL}_{k}(F)$
and satisfy all words in $R$ then $|\langle A\rangle|\le2$, proving
the theorem.
\end{proof}

\subsection*{Acknowledgements}

We thank Nir Avni for the idea to use the effective nullstellensatz.
We thank Ron Livne for help with B\'ezout's theorem. We thank Tsachik
Gelander, Shahar Mozes and Michael Ben Or for many interesting discussions.

GK was supported by the Israel Science Foundation, by the Jesselson
Foundation and by Paul and Tina Gardner. AL was supported by the Israel
Science Foundation, by the National Science Foundation and by the
European Research Council.

\end{document}